\newtheorem{theorem}{Theorem}[section]
\newtheorem{corollary}[theorem]{Corollary}
\newtheorem{lemma}[theorem]{Lemma}
\theoremstyle{definition}
\newtheorem{remark}[theorem]{Remark}
\numberwithin{equation}{section}
\newcommand{\GL}{\mathrm{GL}}
\newcommand{\SL}{\mathrm{SL}}
\newcommand{\SU}{\mathrm{SU}}
\newcommand{\PSL}{\mathrm{PSL}}
\newcommand{\PSU}{\mathrm{PSU}}
\newcommand{\PGL}{\mathrm{PGL}}
\newcommand{\AGaL}{\mathrm{A}\Gamma \mathrm{L}}
\newcommand{\POm}{\mathrm{P} \Omega}
\newcommand{\Alt}{\mathrm{Alt}}
\newcommand{\Sym}{\mathrm{Sym}}
\newcommand{\A}{\mathrm{Alt}}
\newcommand{\D}{\mathrm{D}}
\renewcommand{\S}{\mathrm{S}}
\newcommand{\HS}{\mathrm{HS}}
\newcommand{\Aut}{\mathrm{Aut}}
\newcommand{\Out}{\mathrm{Out}}
\newcommand{\Zbb}{\mathbb{Z}}
\newcommand{\Dmc}{\mathcal{D}}
\newcommand{\Bmc}{\mathcal{B}}
\newcommand{\Pmc}{\mathcal{P}}
\newcommand{\Smc}{\mathcal{S}}
\newcommand{\Lmc}{\mathcal{L}}
\newcommand{\Umc}{\mathcal{U}}
\newcommand{\e}{\epsilon}
\renewcommand{\leq}{\leqslant}
\renewcommand{\geq}{\geqslant}
\newcommand{\imod}[1]{\allowbreak\mkern4mu({\operator@font mod}\,\,#1)}
\begin{document}
 \title[Flag-transitive automorphism groups of $2$-designs]{Almost simple groups as flag-transitive automorphism groups of $2$-designs with $\lambda=2$}

 \author[S.H. Alavi]{Seyed Hassan Alavi}%
 \address{Seyed Hassan Alavi, Department of Mathematics, Faculty of Science, Bu-Ali Sina University, Hamedan, Iran.
 }%
 \email{alavi.s.hassan@basu.ac.ir and  alavi.s.hassan@gmail.com}
 %

 \subjclass[]{05B05; 05B25; 20B25}%
 \keywords{$2$-design, automorphism group, flag-transitive, point-primitive, finite exceptional simple group}
 \date{\today}%

\begin{abstract}
In this article, we study $2$-designs with $\lambda=2$ admitting a flag-transitive almost simple automorphism group with socle a finite simple exceptional group of Lie type, and we prove that such a $2$-design does not exist. In conclusion, we present a classification of $2$-designs with $\lambda=2$ admitting flag-transitive and point-primitive automorphism groups of almost simple type, which states that such a $2$-design belongs to an infinite family of $2$-designs with parameter set $((3^n-1)/2,3,2)$ and $X=\PSL_n(3)$ for some $n\geq 3$, or it is isomorphic to the $2$-design with parameter set $(6,3,2)$, $(7,4,2)$, $(10,4,2)$, $(10,4,2)$, $(11,5,2)$, $(28,7,2)$, $(28,3,2)$, $(36,6,2)$, $(126,6,2)$ or $(176,8,2)$. 
\end{abstract}

\maketitle

\section{Introduction}\label{sec:intro}

In $1990$, Buekenhout, Delandtsheer, Doyen, Kleidman, Liebeck and Saxl \cite{a:BDDKLS90} classified all flag-transitive linear spaces apart from those admitting a one-dimensional affine automorphism group. Since then, there have been numerous efforts to classify $2$-designs $\Dmc$ with parameters $(v,k,2)$ admitting flag-transitive automorphisms group $G$. In the case where $\Dmc$ is symmetric, in a series of papers,  Regueiro \cite{a:Regueiro-Exp,a:Regueiro-alt-spor,a:Regueiro-classical,a:Regueiro-classification,a:Regueiro-reduction} proved that either $(v,k) \in \{(7,4),(11,5),(16,6)\}$, or $G \leq \AGaL_{1}(q)$ for some odd prime power $q$. Liang and Zhou \cite{a:Zhou-lam2-nonsym} proved that, if $\Dmc$ is non-symmetric and $G$ is point-primitive, then $G$ is affine or almost simple. The case when $G$ is point-imprimitive has recently been treated by Devillers, Liang, Praeger and Xia \cite{a:DLPB-2021-lam2}, and they proved that if $G$ is flag-transitive and point-imprimitive, then $\Dmc$ is one of the two known symmetric $2$-$(16, 6, 2)$ designs, one with automorphism group $2^{4}\S_{6}$ and point-stabiliser $(\Zbb_{2}\times \Zbb_{8})(\S_{4}{\cdot}2)$ and the other with automorphism group $\S_{6}$ and point-stabiliser $\S_{4}{\cdot}2$, see also \cite[Examples 1.2]{a:Regueiro-reduction}. Therefore, by \cite[Theorem 1.1]{a:DLPB-2021-lam2}, the study of flag-transitive $2$-$(v,k,2)$ designs reduces to the case where the  automorphism group $G$ is point-primitive of affine or almost simple type. This paper is devoted to studying  $2$-$(v,k,\lambda)$ designs admitting flag-transitive automorphism groups of almost simple type. 

Regueiro \cite{a:Regueiro-Exp,a:Regueiro-alt-spor,a:Regueiro-classical,a:Regueiro-classification,a:Regueiro-reduction} classified all such examples when the design is symmetric (up to those admitting a one-dimensional affine automorphism group).  In the non-symmetric case, Liang and Zhou \cite{a:Zhou-lam2-nonsym-An,a:Zhou-lam2-nonsym} have dealt with the cases where the socle of $G$ is a sporadic simple group or an alternating group. Devillers, Liang, Praeger and Xia \cite{a:DLPB-2021-lam2} have studied the case where $G$ has socle $\PSL_{n}(q)$ with $n\geq 3$, and proved that either $\Dmc$ is the complement of the Fano plane, or $\Dmc$ is the unique $2$-design with parameters $(v,3,2)$, where $G$ has socle $\PSL_n(3)$ and $v=(3^n-1)/2$ for some $n\geq 3$, see \cite[Theorem 1.2]{a:DLPB-2021-lam2}. The case where $G$ has socle a finite classical group has been treated recently in \cite{a:ABDT-ASlam2}. In this paper, we finish the almost simple case, and prove that we have no example when the  socle is a finite simple exceptional group:

\begin{theorem}\label{thm:main}
    Let $\Dmc=(\Pmc,\Bmc)$ be a nontrivial $(v, k, 2)$ design, and let $G$ be a flag-transitive and point-primitive automorphism group of $\Dmc$ whose socle $X$ is a non-abelian finite simple group. Then $X$ cannot be a finite simple exceptional group of Lie type.
\end{theorem}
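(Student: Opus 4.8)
The plan is to follow the now-standard reduction-and-elimination strategy for flag-transitive $2$-designs with small $\lambda$, specialised to exceptional groups of Lie type. Assume for contradiction that $\Dmc=(\Pmc,\Bmc)$ is a nontrivial $2$-$(v,k,2)$ design with flag-transitive point-primitive $G$ whose socle $X$ is a finite simple exceptional group of Lie type over $\Fq$. Flag-transitivity gives the two basic arithmetic facts that drive everything: $r\mid \lambda(v-1)=2(v-1)$ and $r^2>\lambda v=2v$ (the latter from Fisher-type inequalities, since $bk=vr$ and $b\geq v$), so that $r\mid 2(v-1)$ and $r>\sqrt{2v}$; combined with $k\mid rv$ (actually $bk=vr$ and $r(k-1)=2(v-1)$) one gets the key bound $v < r^2 \leq 4(v-1)^2/k^2 \cdot(\text{something})$, but the cleanest consequence I would use is: since $G_x$ is transitive on the $r$ blocks through $x$, we have $r \mid |G_x|$, hence $v=|G:G_x|$ divides $|G_x|\cdot|G_x|/\gcd$-type expressions; more usefully, $r>\sqrt{2v}$ together with $r\mid 2(v-1)$ and $r \mid |G_x|$ forces $|G_x|$ to be large relative to $|G|$, i.e. the subdegrees are small and the permutation rank is tightly controlled.

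**The core of the argument** is point-primitivity: $G_x$ is maximal in $G$, so I would invoke the classification of maximal subgroups of almost simple exceptional groups. For the large-rank families (those of type $E_8, E_7, E_6, {}^2E_6, F_4$) and for the rank-$2$ families ($G_2, {}^3D_4, {}^2F_4, {}^2G_2, {}^2B_2$), the maximal subgroups are listed in the work of Liebeck--Seitz and collaborators (and, for the small cases, in explicit tables). The uniform tool is the bound $|G_x|^3 > |G|$, which follows from $r>\sqrt{2v}$ and $r\mid|G_x|$ together with $v=|G:G_x|$: indeed $|G:G_x|=v<r^2\leq|G_x|^2$ gives $|G|<|G_x|^3$. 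This already eliminates all maximal subgroups of order less than $|G|^{1/3}$, which by the Liebeck--Seitz estimates removes almost everything except (i) maximal parabolic subgroups, (ii) subgroups of the same Lie type over a subfield or of maximal-rank reductive type, and (iii) a short list of "small" exceptions, plus (iv) the cases where $G_x$ is a known "large" subgroup such as a reductive maximal-rank subgroup or a subfield subgroup.

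**Then I would treat the surviving cases individually.** For maximal parabolic $G_x=P$: here $v=|G:P|$ has an explicit polynomial-in-$q$ formula, and the minimal degree/largest subdegree can be read off; one checks the divisibility $r\mid 2(v-1)$ and the inequality $r^2>2v$ against the possible values of $r$ (divisors of $|P|$ in the right range), and derives a contradiction — typically because $v-1$ shares too small a common factor with $|P|$, or because the unique large subdegree $d$ forces $r\leq d$ to be too small. For reductive maximal-rank subgroups and subfield subgroups the index $v$ is again an explicit function of $q$, and the same numerical squeeze applies; here one often uses that $\gcd(v-1, |G_x|)$ is controlled by cyclotomic-polynomial arithmetic in $q$. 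For the finitely many genuinely sporadic maximal subgroups (e.g. $G_x$ almost simple of non-Lie type, or small exceptional configurations), I would simply enumerate: for each, $v$ is a specific integer, and one checks directly that there is no $r$ with $r\mid 2(v-1)$, $r^2>2v$, $r\mid|G_x|$, and $k=1+2(v-1)/r$ an integer with $k\mid |G_x|$ and $k<v$.

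**The main obstacle** I expect is the maximal parabolic case for the large-rank exceptional groups (especially $E_7$ and $E_8$), where $|G_x|$ is huge and $|G|<|G_x|^3$ gives no contradiction, so the crude order bound is useless. There the argument must be refined: one uses that $r$ divides $2(v-1)$ and $\gcd(v-1,|P|_{p'})$ — the $p'$-part is what matters since $r$ and $v$ are coprime to $p$ when $\Dmc$ has no repeated blocks in the relevant sense — and one exploits the known subdegrees of $G$ on $G/P$ (the orbits of $P$ on the opposite unipotent radical), bounding $r$ above by the largest subdegree; combined with $r>\sqrt{2v}$ and the explicit cyclotomic factorisations of $v$ and $v-1$, this should close the case, but it requires careful case-by-case cyclotomic-polynomial estimates rather than a single uniform inequality. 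A secondary subtlety is handling the graph automorphisms and field automorphisms in $G/X$ (so that $G$ is almost simple but not necessarily $X\leq G\leq\Aut(X)$ with $G/X$ cyclic of prime order only): one must ensure the maximal-subgroup data and the index formulas are quoted in the almost-simple generality, not merely for $X$ itself.
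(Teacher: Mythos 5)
Your overall strategy is the same as the paper's: flag-transitivity gives $r\mid 2(v-1)$, $r\mid |G_x|$ and $2v<r^2$, hence $|G|<|G_x|^3$, so $G_x$ is a \emph{large} maximal subgroup; one then quotes the classification of large maximal subgroups of almost simple exceptional groups and eliminates the survivors case by case, using known subdegrees (in particular the unique prime-power subdegree on parabolic actions) to bound $r$. That is exactly the skeleton of Section~3.

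The genuine gap is your expectation that the arithmetic squeeze will close every surviving case. It does not, and the paper has to work harder in precisely the places your plan would stall. Two concrete examples. First, for $X=G_2(q)$ with $G_x\cap X=\SU_3(q){:}2$ ($q$ even), the parameter set $v=q^3(q^3-1)/2$, $b=2(q^6-1)$, $r=2(q^3+1)$, $k=q^3/2$, $\lambda=2$ satisfies every one of your conditions ($r\mid 2(v-1)$, $r^2>2v$, $r\mid|G_x|$, $k$ integral, $bk=vr$); the paper kills it only by locating the block stabiliser inside a maximal parabolic $[q^5]{:}\GL_2(q)$ and showing that $G_{x,B}$ would have to contain an element of order $q^2-1$ that the parabolic does not possess. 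A sibling subcase yields $r=k+\lambda$ and is eliminated not arithmetically but by the embedding lemma (a $2$-design with $r=k+\lambda$, $\lambda\leq 2$ embeds in a symmetric design) together with O'Reilly-Regueiro's classification of flag-transitive biplanes with exceptional socle. Second, for the parabolic action of ${}^2G_2(q)$ the arithmetic leaves $k=q+1$, $r=2q^2$ alive, and the contradiction comes from a geometric argument inside the Ree unital: the block stabiliser is forced to be $A_1(q)$, normal in the line stabiliser $C_X(z)$, whence the block coincides with a line of the unital and the central involution $z$ must lie in the block stabiliser, which it does not. Your proposal contains no mechanism (block-stabiliser containment in maximal overgroups, element-order obstructions, design embeddings, or the unital geometry) for these residual cases, so as written it would not complete the proof. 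A smaller omission: the paper first disposes of odd $r$ by quoting the $\gcd(r,\lambda)=1$ classification, which simplifies the divisibility bookkeeping throughout; your sketch never fixes the parity of $r$.
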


Theorem \ref{thm:main} together with the  main results in  \cite{a:ABDT-ASlam2,a:DLPB-2021-lam2,a:Zhou-lam2-nonsym,a:Zhou-lam2-nonsym-An,a:Montinaro-PSL2lam2} gives a classification of nontrivial $2$-designs with $\lambda=2$ admitting flag-transitive and point-primitive almost simple automorphism groups: 

\begin{corollary}\label{cor:main}
    Let $\Dmc$ be a nontrivial $2$-design with parameters $(v,k,2)$ and let $G$ be a flag-transitive and point-primitive almost simple automorphism group with socle $X$. Then  $(\Dmc,X)$ is as in one of the lines in {\rm Table~\ref{tbl:main}}, or $\Dmc$ is the $2$-design with parameters $((3^n-1)/2,3,2)$ and $X=\PSL_n(3)$ for some $n\geq 3$ constructed as in {\rm \cite[Theorem 1.2]{a:DLPB-2021-lam2}}.
\end{corollary}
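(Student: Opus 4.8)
The plan is to prove Corollary~\ref{cor:main} as an assembly argument driven by the classification of finite simple groups. Under the stated hypotheses the socle $X$ is a non-abelian finite simple group, so $X$ is an alternating group, a sporadic group, a classical group of Lie type, or an exceptional group of Lie type, and I would run through these four families, invoking in each case the appropriate reference or Theorem~\ref{thm:main} and collecting the designs that survive. Since $G$ is assumed point-primitive and almost simple, I do not need the point-imprimitive reduction of \cite{a:DLPB-2021-lam2}; the whole task is to check that the four cited classifications, taken together with Theorem~\ref{thm:main}, exhaust exactly the conclusion of the corollary.

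First I would dispose of the exceptional case: Theorem~\ref{thm:main} states precisely that no nontrivial flag-transitive, point-primitive $2$-$(v,k,2)$ design admits an almost simple automorphism group with exceptional socle, so this family contributes nothing. Next, for $X$ a classical group I would split according to whether $X=\PSL_n(q)$ with $n\geq 3$ or not. In the former case \cite[Theorem 1.2]{a:DLPB-2021-lam2} yields exactly the complement of the Fano plane (the $(7,4,2)$ design, with $n=3$, $q=2$) together with the infinite family of parameters $((3^n-1)/2,3,2)$ and $X=\PSL_n(3)$; in the latter case \cite{a:ABDT-ASlam2} supplies the remaining classical examples, among them the $(11,5,2)$ biplane with $X=\PSL_2(11)$. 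Finally, for $X$ sporadic I would quote \cite{a:Zhou-lam2-nonsym} and for $X$ alternating \cite{a:Zhou-lam2-nonsym-An}. These two references classify the \emph{non-symmetric} designs in those families, so to close the symmetric subcase I would invoke Regueiro's classification \cite{a:Regueiro-Exp,a:Regueiro-alt-spor,a:Regueiro-classical,a:Regueiro-classification,a:Regueiro-reduction}, by which every flag-transitive symmetric $2$-design with $\lambda=2$ has $(v,k)\in\{(7,4),(11,5),(16,6)\}$ or lies inside $\AGaL_1(q)$. Discarding the point-imprimitive $(16,6,2)$ pair and the affine group $\AGaL_1(q)$ (which is not almost simple), the only point-primitive symmetric examples are $(7,4,2)$ and $(11,5,2)$, both with classical socle; hence no point-primitive symmetric design has alternating, sporadic, or exceptional socle. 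Taking the union over all four cases produces exactly the lines of Table~\ref{tbl:main} together with the $\PSL_n(3)$ family, which is the assertion.

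The proof thus reduces to bookkeeping, and the one point that needs genuine care is the handling of small groups admitting several descriptions as simple groups of Lie type or as alternating groups. I would fix an explicit convention assigning each such group to a single family—for example treating $\PSL_2(7)\cong\PSL_3(2)$, $\PSL_2(4)\cong\PSL_2(5)\cong\A_5$, $\PSL_2(9)\cong\A_6$ and $\PSL_4(2)\cong\A_8$ under the classical references rather than the alternating one—so that every design is counted once and no example slips through a gap between the cited classifications. The hard part is therefore not any single computation but verifying that the four results, proved under slightly different conventions and (for the alternating and sporadic socles) only in the non-symmetric regime, jointly cover every isomorphism type of $X$ and the full value $\lambda=2$ without omission or duplication; once these overlaps are reconciled against Regueiro's symmetric classification, the corollary follows immediately.
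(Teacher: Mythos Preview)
Your proposal is correct and follows essentially the same approach as the paper: the paper's own justification for the corollary is the single sentence preceding its statement, namely that Theorem~\ref{thm:main} together with the main results in \cite{a:ABDT-ASlam2,a:DLPB-2021-lam2,a:Zhou-lam2-nonsym,a:Zhou-lam2-nonsym-An} yields the classification, i.e.\ exactly the assembly-via-CFSG argument you outline. Your version is in fact more careful than the paper's, since you explicitly note that \cite{a:Zhou-lam2-nonsym,a:Zhou-lam2-nonsym-An} treat only the non-symmetric case and that Regueiro's work is needed to close the symmetric alternating and sporadic subcases; the paper glosses over this, relying implicitly on the fact that the surviving symmetric examples all have classical socle and hence are already captured by \cite{a:ABDT-ASlam2,a:DLPB-2021-lam2}.
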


\begin{table}
    \centering
    \small
    \caption{Some flag-transitive and point-primitive nontrivial $2$-design with $\lambda=2$.}\label{tbl:main}
    \begin{tabular}{clllllllll}
        \noalign{\smallskip}\hline\noalign{\smallskip}
        Line &
        $v$ &
        $b$ &
        $r$ &
        $k$ &
        $X$ &
        $X_{\alpha}$ &
        $\Aut(\Dmc)$ &
        References \\
        \noalign{\smallskip}\hline\noalign{\smallskip}
        $1$ &
        $6$ &
        $10$ &
        $5$ &
        $3$ &
        $\PSL_{2}(4)\cong \PSL_{2}(5)\cong \A_{5}$ &
        $\D_{8}$ &
        $\PSL_{2}(5)$ &
        \cite{a:ABD-Un-CP,a:ABD-Un-CP-cor} \\
        $2$ &
        $7$ &
        $7$ &
        $4$ &
        $4$ &
        $\PSL_{2}(7)\cong \PSL_{3}(2)$ &
        $\S_{4}$ &
        $\PSL_{2}(7)$ &
        \cite{a:ABD-PSL2,b:Handbook,a:Kantor-85-2-trans,a:Regueiro-classical}\\
        $3$ &
        $10$ &
        $15$ &
        $6$ &
        $4$ &
        $\PSL_{2}(4)\cong \PSL_{2}(5)\cong \A_{5}$&
        $\D_{12}$ &
        $\PSL_{2}(4):2$&
        \cite{a:Zhou-lam2-nonsym-An}
        \\%
        $4$ &
        $10$ &
        $15$ &
        $6$ &
        $4$ &
        $\PSL_{2}(9)\cong \A_{6}$&
        $3^{2}:4$ &
        $\PSL_{2}(9):2$&
        \cite{a:Zhou-lam2-nonsym-An} \\
        $5$ &
        $11$ &
        $11$ &
        $5$ &
        $5$ &
        $\PSL_{2}(11)$ &
        $\A_{5}$ &
        $\PSL_{2}(11)$ &
        \cite{a:ABD-PSL2,a:ABD-Un-CP,a:ABD-Un-CP-cor}\\
        $6$ &
        $28$ &
        $36$ &
        $9$ &
        $7$ &
        $\PSL_{2}(8)$ &
        $\D_{18}$ &
        $\PSL_{2}(8):3$ &
        \cite{a:ABD-Un-CP,a:ABD-Un-CP-cor} \\
        $7$ &
        $28$ &
        $252$ &
        $27$ &
        $3$ &
        $\PSU_{3}(3)$ &
        $3^{1+2}:8$ &
        $\PSU_{3}(3):2$ &
        \cite{a:ABD-Un-CP,a:ABD-Un-CP-cor} \\
        $8$ &
        $36$ & 
        $84$ &
        $14$ & 
        $6$ & 
        $\PSL_{2}(8)$ &
        $\D_{14}$ &
        $\PSL_{2}(8):3$ &
        \cite{a:ABDT-ASlam2,a:Montinaro-k2} \\
        $9$ &
        $126$ & 
        $1050$ & 
        $50$ & 
        $6$ & 
        $\PSU_{3}(5)$ &
        $5^{1+2}:8$ &
        $\PSU_{3}(5):2$ &
        \cite{a:ABDT-ASlam2} \\
        $10$ &
        $176$ & 
        $1100$ & 
        $50$ & 
        $8$ & 
        $\HS$ &
        $\PSU_{3}(5):2$ &
        $\HS$ &
        \cite{a:Zhou-lam2-nonsym} \\
        \noalign{\smallskip}\hline\noalign{\smallskip}
        
    \end{tabular}
\end{table}

In order to prove Theorem~\ref{thm:main} in Section~\ref{sec:proof},  we first observe that the group $G$ is point-primitive, and so the point-stabiliser $H$ is maximal in $G$. In particular,  flag-transitivity of $G$ implies that $H$ is large, that is to say, $|G|\leq |H|^{3}$, and then we can apply  \cite[Theorem~1.6]{a:ABD-Exp} and \cite{a:AB-Large-15} in which the large maximal subgroups of almost simple groups whose socle $X$ is a finite simple exceptional group of Lie type are determined. We then analyse all possible cases and prove that the only possible designs are those given in Theorem \ref{thm:main}.

\subsection{Definitions and notation}\label{sec:defn}

All groups and incidence structures in this paper are finite.
We here write $\Alt_{n}$ and $\Sym_{n}$ for the alternating group and the symmetric group on $n$ letters, respectively, and we denote by ``$[n]$'' a group of order $n$. Recall  that we in this paper adopt the standard Lie notation for groups of Lie type. For example, we sometimes write $A_{n-1}(q)$ and $A_{n-1}^{-}(q)$ in place of $\PSL_{n}(q)$ and $\PSU_{n}(q)$, respectively, $D_n^{-}(q)$ instead of  $\POm_{2n}^{-}(q)$, and $E_6^{-}(q)$ for ${}^2\!E_6(q)$.
We assume that $q>2$ when $G=G_2(q)$ since $G_{2}(2)$ is not simple and $G_2(2)' \cong A^{-}_{2}(3)$. Moreover, we view the Tits group ${}^2\!F_4(2)'$ as a sporadic group. We write $P_{i}$ to denote a standard maximal parabolic subgroup corresponding to deleting the $i$-th node in the Dynkin diagram of $X$, where we label the Dynkin diagram in the usual way, following \cite[p. 180]{b:KL-90}. We also use $P_{i,j}$ to denote the intersection of appropriate parabolic subgroups of
type $P_{i}$ and $P_{j}$.
For a given positive integer $n$ and a prime divisor $p$ of $n$, we denote the $p$-part of $n$ by $n_{p}$, that is to say, $n_{p}=p^{t}$ if $p^{t}\mid n$ but $p^{t+1}\nmid n$.
Further notation and definitions in both design theory and group theory are standard and can be found, for example, in \cite{b:Atlas,b:Dixon,b:KL-90,b:lander}.

\section{Preliminaries}\label{sec:pre}

In this section, we state some useful facts in both design theory and group theory.

\begin{lemma}\label{lem:param}
	Let $\Dmc$ be a $2$-design with parameters set $(v,k,\lambda)$. Then
	\begin{enumerate}[\rm (a)]
		\item $r(k-1)=\lambda(v-1)$;
		\item $vr=bk$;
		\item $v\leq b$ and $k\leq r$;
		\item $\lambda v<r^2$.
	\end{enumerate}
\end{lemma}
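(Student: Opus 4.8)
The plan is to establish (a) and (b) by elementary double counting, (c) by the classical rank argument behind Fisher's inequality, and (d) by combining the identities from (a) and (c). For (a), I would fix a point $p\in\Pmc$ and count in two ways the ordered pairs $(q,B)$ with $q\in\Pmc\setminus\{p\}$, $B\in\Bmc$ and $p,q\in B$: choosing the block through $p$ first (there are $r$ of them, by the definition of the replication number) and then a second point on it (there are $k-1$) gives $r(k-1)$, while choosing the point $q\neq p$ first (there are $v-1$) and then a block through $p$ and $q$ (there are $\lambda$) gives $\lambda(v-1)$; hence $r(k-1)=\lambda(v-1)$. For (b), count the flags, i.e.\ the incident point-block pairs of $\Dmc$: summing over the $v$ points gives $vr$, summing over the $b$ blocks gives $bk$, so $vr=bk$.

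For (c), let $N$ be the $v\times b$ point-block incidence matrix of $\Dmc$ and note the standard identity $NN^{\mathsf T}=(r-\lambda)I_v+\lambda J_v$, where $J_v$ is the $v\times v$ all-ones matrix (the $(p,q)$-entry counts the blocks through both $p$ and $q$, which is $r$ if $p=q$ and $\lambda$ otherwise). Since $\Dmc$ is nontrivial we have $k<v$, so (a) gives $r=\lambda(v-1)/(k-1)>\lambda$; thus the eigenvalues $r+(v-1)\lambda$ and $r-\lambda$ of $(r-\lambda)I_v+\lambda J_v$ are both positive, $NN^{\mathsf T}$ is nonsingular, $\operatorname{rank}(N)=v$, and therefore $b\geq v$. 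Feeding $b\geq v$ into $vr=bk$ from (b) yields $k\leq r$. (Alternatively one may simply invoke Fisher's inequality, e.g.\ from \cite{b:lander}.)

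Finally, for (d) I would rewrite, using (a), $\lambda v=\lambda(v-1)+\lambda=r(k-1)+\lambda=rk-(r-\lambda)$; since $r>\lambda$ (again because $k<v$) and $k\leq r$ by (c), this gives $\lambda v=rk-(r-\lambda)<rk\leq r^2$. The only step that is not a one-line count is (c), and the real content there is the nonsingularity of $NN^{\mathsf T}$ — equivalently the inequality $r>\lambda$, which rests on the design being nontrivial ($k<v$); the remainder is bookkeeping and elementary arithmetic.
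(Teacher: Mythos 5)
Your proof is correct and follows essentially the same route as the paper: (a) and (b) by double counting, (c) via Fisher's inequality (which the paper simply cites from Dembowski, whereas you reprove it with the incidence-matrix rank argument), and (d) by combining (a) and (c). If anything, your derivation of (d), namely $\lambda v = rk-(r-\lambda)<rk\leq r^2$ using $r>\lambda$, is more careful than the paper's chain $r^{2}>r(k-1)=\lambda(v-1)>\lambda v$, whose final inequality is written backwards; your explicit appeal to nontriviality ($k<v$, hence $r>\lambda$) is also genuinely needed, since (c) and (d) can fail for trivial designs.
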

\begin{proof}
	Parts (a) and (b) follow immediately by simple counting. The inequality $v\leq b$ is the Fisher's inequality \cite[p. 57]{b:dembowski}, and so by applying part (b), we have that $k\leq r$. By part (a) and (c), we easily observe that  $r^{2}>r(k-1)=\lambda(v-1)>\lambda v$, and so $\lambda v<r^{2}$, as desired.
\end{proof}

Lemma \ref{lem:New} below is an elementary result on maximal subgroups of finite almost simple groups.

\begin{lemma}\label{lem:New}{\rm \cite[Lemma 2.2]{a:ABD-PSL2}}
	Let $G$  be an almost simple group with socle $X$, and let $H$ be maximal in $G$ not containing $X$. Then $G=HX$ and
	$|H|$ divides $|\Out(X)|\cdot |H\cap X|$.
\end{lemma}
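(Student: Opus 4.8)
The plan is to use only the two structural facts at hand: $X$ is a non-abelian simple normal subgroup of $G$, and $H$ is a maximal subgroup of $G$ with $X\not\leq H$. First I would form the product $HX$. Since $X\trianglelefteq G$, the subset $HX$ is a subgroup of $G$; and because $X\not\leq H$ there is an element of $X$ lying outside $H$, so $HX$ properly contains $H$. Maximality of $H$ in $G$ then forces $HX=G$, which is the first assertion.

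For the divisibility statement I would apply the second isomorphism theorem: $G/X=(HX)/X\cong H/(H\cap X)$, so that $|H|/|H\cap X|=|G|/|X|=|G/X|$. It then remains only to bound $|G/X|$. Here I would use that a non-abelian simple group is centreless, so conjugation yields an isomorphism $X\cong\Inn(X)\leq\Aut(X)$; under the standard identification $X\leq G\leq\Aut(X)$ that defines ``almost simple'', the quotient $G/X$ is a subgroup of $\Aut(X)/\Inn(X)=\Out(X)$. Consequently $|G/X|$ divides $|\Out(X)|$, and combining this with the displayed equality gives that $|H|/|H\cap X|$ divides $|\Out(X)|$, i.e.\ $|H|$ divides $|\Out(X)|\cdot|H\cap X|$, as claimed.

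I do not expect a genuine obstacle here: the argument is essentially a one-line application of the correspondence theorem together with the second isomorphism theorem and the elementary facts $\Inn(X)\cong X$ and $\Aut(X)/\Inn(X)=\Out(X)$ valid for simple $X$. The only point requiring a moment's care is the identification of the socle $X$ with $\Inn(X)$ inside $\Aut(X)$, so that the chain $X\leq G\leq\Aut(X)$ and the inclusion $G/X\leq\Out(X)$ are literally meaningful; but this is exactly what ``almost simple with socle $X$'' means, so nothing substantial is required.
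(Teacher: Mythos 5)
Your argument is correct and complete: the paper itself does not prove Lemma~\ref{lem:New} but merely cites it from \cite[Lemma 2.2]{a:ABD-PSL2}, and your two steps (maximality of $H$ plus normality of $X$ forcing $HX=G$, then the second isomorphism theorem together with $G/X\leq\Out(X)$) constitute exactly the standard argument given in that source. Nothing is missing.
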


\begin{lemma}\label{lem:Tits}
	Suppose that $\Dmc$ is block design with parameters $(v,k,\lambda)$ admitting a flag-transitive and point-primitive almost simple automorphism group $G$ with socle $X$ of Lie type in characteristic $p$. Suppose also that the point-stabiliser $G_{\alpha}$ does not contain $X$ and is not a parabolic subgroup of $G$. Then $\gcd(p,v-1)=1$.
\end{lemma}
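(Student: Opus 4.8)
The plan is to prove the stronger statement that $p$ divides $v$: the desired conclusion then follows at once, since a prime dividing $v$ cannot divide $v-1$, so $\gcd(p,v-1)=1$.

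\emph{Reduction to the socle.} Since $G$ is point-primitive, $G_\alpha$ is a maximal subgroup of $G$; as $G_\alpha$ does not contain $X$ we have $G=XG_\alpha$, hence $v=|G:G_\alpha|=|X:X_\alpha|$, where $X_\alpha:=G_\alpha\cap X$. I would also record that $X_\alpha$ is not a parabolic subgroup of $X$: parabolic subgroups are self-normalising, so if $X_\alpha$ were parabolic then $G_\alpha\le N_G(X_\alpha)\lneq G$, and maximality of $G_\alpha$ would give $G_\alpha=N_G(X_\alpha)$, a parabolic subgroup of $G$ --- contrary to hypothesis.

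\emph{The Borel--Tits step.} Suppose, for a contradiction, that $p\nmid v$. Then $|X_\alpha|_p=|X|_p$, so $X_\alpha$ contains a Sylow $p$-subgroup $U$ of $X$; in particular $X_\alpha\neq 1$, and also $X_\alpha\neq X$, so $X_\alpha$ is not normal in the simple group $X$, whence $N_G(X_\alpha)\lneq G$ and therefore $G_\alpha=N_G(X_\alpha)$. By the Borel--Tits theorem, a proper subgroup of a finite simple group of Lie type in characteristic $p$ that contains a Sylow $p$-subgroup lies inside a proper parabolic subgroup; moreover the parabolic subgroups of $X$ containing the fixed $U$ form a lattice --- they are precisely the standard parabolics with respect to the unique Borel subgroup containing $U$ --- so there is a unique minimal parabolic subgroup $P$ of $X$ with $X_\alpha\le P$. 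Being canonically attached to $X_\alpha$, this $P$ is normalised by $N_G(X_\alpha)=G_\alpha$, so $G_\alpha\le N_G(P)\lneq G$, and maximality of $G_\alpha$ forces $G_\alpha=N_G(P)$, a parabolic subgroup of $G$ --- a contradiction. Hence $p\mid v$, and consequently $\gcd(p,v-1)=1$, as required. (Note that the flag-transitivity and design hypotheses are not actually used; they are present only because the lemma is a tool applied in that setting.)

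\emph{Main obstacle.} The delicate point is the Borel--Tits input and its transfer from $X$ to $G$: one must argue carefully that a subgroup of $X$ containing a \emph{full} Sylow $p$-subgroup --- not merely a maximal such subgroup --- is contained in a proper parabolic, and that a canonical choice of that parabolic is available so that its normaliser in $G$ can be controlled (recall that $X_\alpha$ need not be maximal in $X$). In the setting of the present paper there is a safe, if less uniform, alternative: bypass Borel--Tits and instead appeal to the explicit list of maximal subgroups of almost simple exceptional groups used in Section~\ref{sec:proof}, checking directly that every non-parabolic maximal subgroup has order with $p$-part strictly smaller than $|X|_p$, so that $p$ divides $v$ immediately.
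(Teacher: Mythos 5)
Your proof is correct and rests on the same engine as the paper's, which disposes of the lemma in a single line: since $G_{\alpha}$ is maximal in $G$ and not parabolic, Tits' Lemma \cite[1.6]{a:Seitz-TitsLemma} gives $p\mid |G:G_{\alpha}|=v$, hence $\gcd(p,v-1)=1$. Your reduction to the socle, the remark that the design hypotheses play no role, and the lattice argument transferring the conclusion from $X$ back to $G$ (the proper parabolics containing $X_{\alpha}$ all contain the unique Borel subgroup over $U$, so there is a canonical minimal one, normalised by $N_G(X_{\alpha})=G_{\alpha}$) are all sound and make explicit what the paper leaves implicit. The one point you should repair is the attribution in your ``Borel--Tits step'': the statement you actually invoke --- that a proper subgroup of $X$ containing a \emph{full} Sylow $p$-subgroup lies in a proper parabolic --- is precisely Tits' Lemma, not the Borel--Tits theorem. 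Borel--Tits applies to subgroups $H$ with $O_p(H)\neq 1$, and an arbitrary overgroup of a Sylow $p$-subgroup need not satisfy that hypothesis, so it cannot be cited directly here; you must either quote Tits' Lemma (as the paper does) or prove it, e.g.\ via the Bruhat decomposition and the uniqueness of the Borel subgroup containing $U$. You correctly flag this as the delicate point, and your proposed fallback (checking that every non-parabolic large maximal subgroup in the relevant lists has $p$-part smaller than $|X|_p$) would indeed work in the setting of this paper, though it is not needed once Tits' Lemma is in hand.
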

\begin{proof}
	Note that $G_{\alpha}$ is maximal in $G$, then by Tits' Lemma \cite[1.6]{a:Seitz-TitsLemma}, $p$ divides $|G:G_{\alpha}|=v$, and so  $\gcd(p,v-1)=1$.
\end{proof}

If a group $G$ acts on a set $\Pmc$ and $\alpha\in \Pmc$, the \emph{subdegrees} of $G$ are the sizes of the orbits of the action of the point-stabiliser $G_\alpha$ on $\Pmc$. 

\begin{lemma}\label{lem:subdeg}{\rm \cite[3.9]{a:LSS1987}}
	If $X$ is a group of Lie type in characteristic $p$, acting on the set
	of cosets of a maximal parabolic subgroup, and $X$ is not $A_{n-1}(q)$, $D_{n}(q)$
	(with $n$ odd), or $E_{6}(q)$, then there is a unique subdegree which is a power of $p$.
\end{lemma}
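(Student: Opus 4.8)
The statement is a citation to \cite[3.9]{a:LSS1987}, so I would reconstruct the argument rather than invent a new one. The plan is to pass to the adjoint algebraic group $\overline{X}$ over $\overline{\mathbb{F}_p}$ with Frobenius endomorphism $\sigma$, so that $X = \overline{X}_\sigma$ up to centre, and the maximal parabolic $P = P_i$ arises as $\overline{P}_\sigma$ for a $\sigma$-stable maximal parabolic $\overline{P}$ of $\overline{X}$. Writing $\overline{P} = \overline{L}\,\overline{U}$ with unipotent radical $\overline{U}$ and Levi complement $\overline{L}$, and letting $\overline{U}^-$ be the opposite unipotent radical, the key geometric input is the Bruhat/big-cell decomposition: the double coset $\overline{P}\,w_0^{\overline{P}}\,\overline{P}$ (for the longest element of the Weyl group relative to $\overline{L}$) is dense, and the orbit of $\overline{P}$ on $\overline{X}/\overline{P}$ through the base point of the opposite parabolic is exactly $\overline{U}^- \cdot \overline{P}/\overline{P}$, which is an affine space; taking $\sigma$-fixed points, this orbit of $P$ on $X/P$ has size $|U^-| = |U|$, a power of $p$. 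So \emph{existence} of a $p$-power subdegree is essentially automatic from the parabolic structure, and the content of the lemma is \emph{uniqueness}.

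For uniqueness I would argue that any $P$-orbit on $X/P$ that is a $p$-power in size must correspond to a double coset $P\dot{w}P$ with $\dot{w}$ representing an element $w$ of the Weyl group such that the associated orbit $\overline{P}\dot{w}\overline{P}/\overline{P}$, intersected with $\sigma$-fixed points, has $p$-power order; one computes $|P\dot{w}P/P| = |U : U \cap {}^{w}U^-|\cdot(\text{a factor coming from the Levi})$, and the Levi factor $|L : L \cap {}^w L|$ (a quotient involving the reductive part) is coprime to $p$ unless it is trivial. Hence a $p$-power subdegree forces the Levi contribution to vanish, i.e. $\dot{w}$ must lie in the "most distant" double coset. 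The remaining point is to show there is only one such double coset, equivalently that the set of cosets at maximal distance from the base point in the associated $X$-graph (the orbital graph) is a single $P$-orbit. This is where the excluded families $A_{n-1}(q)$, $D_n(q)$ with $n$ odd, and $E_6(q)$ enter: in exactly these cases the Dynkin diagram has a symmetry exchanging the node $i$ with a different node $i'$, so the graph automorphism produces a \emph{second} parabolic of the same type and a second orbit of the same $p$-power size; equivalently, the permutation character / the combinatorics of $W$ acting on $W_L\backslash W/W_L$ has two "extreme" double cosets rather than one. So the heart of the proof is a case-free statement about Coxeter systems: for $(W, W_L)$ with $W_L$ maximal, the double cosets $W_L\backslash W/W_L$ whose minimal-length representative $w$ has $\ell(w)$ realising the "no Levi contribution" condition form a single class precisely when $X$ is not of the three listed types.

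I would therefore organise the write-up as: (1) reduce $X/P$ to $\overline{X}/\overline{P}$ and invoke the Bruhat decomposition to parametrise subdegrees by $W_L\backslash W/W_L$; (2) compute the $p$-part of each subdegree and show it is a full $p$-power iff the corresponding double coset has trivial "Levi part", which pins $w$ down to lie in a distinguished set $\mathcal{D}$; (3) do the Weyl-group combinatorics, using the classification of diagrams with a nontrivial symmetry, to see $|\mathcal{D}| = 1$ outside the excluded list. The main obstacle is step (3): handling all node-choices $i$ in all exceptional types uniformly. In practice I expect one proves it by the general structural fact that $\mathcal{D}$ is in bijection with the set of parabolics of $\overline{X}$ of type $P_i$ lying at maximal distance, and then invokes that a diagram automorphism is the only source of extra such parabolics; for the exceptional groups $G_2, F_4, E_7, E_8, {}^2\!B_2, {}^2\!G_2, {}^3\!D_4, {}^2\!F_4$ the diagram (or the $\sigma$-twisted diagram) has no symmetry fixing the relevant node setup, giving $|\mathcal{D}|=1$, while ${}^2\!E_6(q)$ is covered because the twist already kills the order-2 symmetry of the $E_6$ diagram. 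Since the lemma is only applied in this paper to exceptional socles, one could even restrict attention to that list and check it case by case, which is routine given the explicit parabolic orders.
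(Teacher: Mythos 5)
The paper offers no proof of this lemma: it is quoted verbatim as \cite[3.9]{a:LSS1987}, so there is nothing internal to compare your reconstruction against. Your skeleton (pass to the algebraic group, parametrise $P$-orbits on $X/P$ by $W_J\backslash W/W_J$ via Bruhat, compute $p$-parts of orbit sizes) is the right one, but there is a genuine error in where you locate the content of the hypothesis. You claim that existence of a $p$-power subdegree is ``essentially automatic'' from the big-cell orbit $\overline{U}^-\cdot\overline{P}/\overline{P}$, and that the excluded types $A_{n-1}(q)$, $D_n(q)$ ($n$ odd), $E_6(q)$ are excluded because a diagram symmetry produces a \emph{second} $p$-power orbit. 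This is backwards. The coset $\overline{P}^-$ is a point of $\overline{X}/\overline{P}$ only when the opposite parabolic $\overline{P}^-$ is conjugate to $\overline{P}$, i.e.\ only when the involution $\tau=-w_0$ of the Dynkin diagram fixes the deleted node $i$; the excluded types are precisely those with $w_0\neq -1$, and when $\tau(i)\neq i$ the failure mode is \emph{non-existence} of a $p$-power subdegree, not non-uniqueness. Concretely: $\PSL_n(q)$ on $\PG(n-1,q)$ is $2$-transitive with the single nontrivial subdegree $q(q^{n-1}-1)/(q-1)$, not a $p$-power; and $E_6(q)$ on the cosets of $P_1$ has rank $3$ with subdegrees $q(q^8-1)(q^3+1)/(q-1)$ and $q^8(q^5-1)(q^4+1)/(q-1)$ (quoted in Section 3 of this very paper), again neither a $p$-power. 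Remark~\ref{rem:subdeg} also points the same way: the conclusion survives in the excluded types exactly for those parabolics $P_i$ (e.g.\ $P_2$, $P_4$ in $E_6$) whose node is fixed by the diagram symmetry.

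The clean argument runs as follows: a $P$-orbit $\{gP\}$ has $p$-power size iff $P\cap {}^gP$ contains a Levi subgroup $L$ of $P$; for $P$ maximal the only parabolics of the same type as $P$ containing a fixed Levi $L_J$ (with $J=\Delta\setminus\{i\}$) are $P_J$ itself and, when $\tau(i)=i$, its opposite $P_J^-$. This gives existence \emph{and} uniqueness of the nontrivial $p$-power orbit simultaneously under the hypothesis, and shows non-existence when $\tau(i)\neq i$. Your step (3), as written, would be trying to prove the wrong dichotomy. Your final suggestion --- verify the statement case by case for the exceptional socles actually used in the paper --- is sound as a fallback, but it is not what the cited lemma asserts, and the paper in any case leans on the general statement only through the reference.
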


\begin{remark}\label{rem:subdeg}
	We remark that even in the cases excluded in Lemma~\ref{lem:subdeg}, many of the maximal parabolic subgroups still have the property as asserted, see the  proof of  Lemma 2.6 in \cite{a:Saxl2002}. In particular, for an almost simple group $G$ with socle $X=E_{6}(q)$, if $G$  contains a
	graph automorphism or $H =P_{i}$ with $i=2$ or $4$, the conclusion of Lemma~\ref{lem:subdeg} is still true.
\end{remark}

\begin{lemma}\label{lem:six}
	Let $\Dmc$ be a $2$-design with parameters set $(v,k,\lambda)$, and let $\alpha$ be a point of $\Dmc$. If $G$ a flag-transitive automorphism group of $\Dmc$, then
	\begin{enumerate}[\rm (a)]
		\item $r\mid |G_{\alpha}|$;
		\item $r\mid \lambda d$, for all nontrivial subdegrees $d$ of $G$.
	\end{enumerate}
\end{lemma}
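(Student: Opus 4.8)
The plan is to prove both divisibility statements by exploiting flag-transitivity, which forces $G_\alpha$ to act transitively on the $r$ blocks through $\alpha$, together with an orbit-counting argument for part (b).

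\smallskip

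\textbf{Part (a).} First I would fix a point $\alpha$ and consider the set $\Bmc_\alpha$ of blocks incident with $\alpha$, which has size $r$. Since $G$ is flag-transitive, it acts transitively on the set of flags $(\alpha',B')$; restricting to flags with first coordinate $\alpha$, the stabiliser $G_\alpha$ acts transitively on $\Bmc_\alpha$. Hence $r = |\Bmc_\alpha|$ divides $|G_\alpha|$ by the orbit–stabiliser theorem. (Implicitly one uses that $G_\alpha$ indeed preserves $\Bmc_\alpha$, which is immediate since automorphisms fixing $\alpha$ permute the blocks through $\alpha$.)

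\smallskip

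\textbf{Part (b).} Let $d$ be a nontrivial subdegree, i.e.\ the size of a $G_\alpha$-orbit $\Delta$ on $\Pmc \setminus \{\alpha\}$, and pick $\beta \in \Delta$. I would count, in two ways, the set $S$ of pairs $(\gamma, B)$ with $\gamma \in \Delta$, $B \in \Bmc$, and $\{\alpha, \gamma\} \subseteq B$. On one hand, for each fixed $\gamma \in \Delta$ the pair $\{\alpha,\gamma\}$ lies in exactly $\lambda$ blocks, so $|S| = \lambda d$. On the other hand, $G_\alpha$ acts on $S$, and I claim this action is transitive: given two such pairs $(\gamma,B)$ and $(\gamma',B')$, flag-transitivity (applied to the flags $(\alpha, B)$ and $(\alpha,B')$, which as in part (a) lie in a single $G_\alpha$-orbit) lets us move $B$ to $B'$ while fixing $\alpha$; a further element of the stabiliser $G_{\alpha,B'}$ must then be used to align $\gamma$ with $\gamma'$, and here one needs that $G_{\alpha,B'}$ acts transitively on the points of $B'$ other than $\alpha$ — this again follows from flag-transitivity, since $G_{\alpha,B'}$ is the stabiliser of the flag $(\alpha,B')$ and $G$ is transitive on flags contained in $B'$. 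Since all the points $\gamma' \in B' \setminus \{\alpha\}$ obtained this way lie in $\Delta$ (as $\gamma'$ is a $G_\alpha$-image of $\beta$), the orbit of $(\gamma,B)$ under $G_\alpha$ is all of $S$, so $|S|$ is a $G_\alpha$-orbit size, hence divides $|G_\alpha|$; more to the point, the size of any $G_\alpha$-orbit that fibres $G_\alpha$-equivariantly over the orbit $\Bmc_\alpha$ is a multiple of $r$. Concretely, the first projection $S \to \Bmc_\alpha$ is $G_\alpha$-equivariant and surjective with all fibres of equal size (by transitivity of $G_\alpha$ on $S$ and on $\Bmc_\alpha$), so $r = |\Bmc_\alpha|$ divides $|S| = \lambda d$.

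\smallskip

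\textbf{Main obstacle.} The genuinely delicate point is the transitivity claim in part (b): one must be careful that moving the block $B$ to $B'$ via $G_\alpha$ and then adjusting the point within $B'$ does not take us out of the orbit $\Delta$. The clean way to phrase it, which I would adopt, is to observe directly that $r$ divides $|S|$ because $S$ maps onto $\Bmc_\alpha$ $G_\alpha$-equivariantly with constant fibre size — constancy of fibre size being where flag-transitivity (via transitivity of the flag-stabiliser $G_{\alpha,B}$ on the non-$\alpha$ points of $B$) is essential — rather than trying to prove full transitivity of $G_\alpha$ on $S$ from scratch. Everything else is routine double counting and orbit–stabiliser.
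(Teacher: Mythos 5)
Part (a) of your proposal is correct and is exactly the paper's argument: flag-transitivity makes $G_{\alpha}$ transitive on the $r$ blocks through $\alpha$, and orbit--stabiliser gives $r\mid |G_{\alpha}|$. For part (b) the paper offers no proof of its own (it cites Davies), and your double count of $S=\{(\gamma,B):\gamma\in\Delta,\ \{\alpha,\gamma\}\subseteq B\}$, yielding $|S|=\lambda d$ on one side and $r$ times a constant fibre size on the other, is indeed the standard argument behind that citation. So the skeleton is right.

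However, the justification you give for the crucial step is wrong. You assert --- twice, and you single it out as the place ``where flag-transitivity is essential'' --- that the flag-stabiliser $G_{\alpha,B}$ acts transitively on $B\setminus\{\alpha\}$, and you use this both for the claimed transitivity of $G_{\alpha}$ on $S$ and for the constancy of the fibre sizes. That assertion does \emph{not} follow from flag-transitivity: it is equivalent to $G_{B}$ being $2$-transitive on $B$, which is strictly stronger. A counterexample sits in the paper's own Table~1: for the flag-transitive $2$-$(36,6,2)$ design with $G=\PSL_{2}(8){:}3$ the number of flags is $vr=bk=504$, so $|G_{\alpha,B}|=1512/504=3$, and a group of order $3$ cannot act transitively on the $5$ points of $B\setminus\{\alpha\}$; for the same reason $G_{\alpha}$ need not be transitive on $S$. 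Fortunately the false claim is not needed. The map $S\to\Bmc_{\alpha}$, $(\gamma,B)\mapsto B$, is $G_{\alpha}$-equivariant, and $G_{\alpha}$ is transitive on $\Bmc_{\alpha}$ by part (a); hence for any $g\in G_{\alpha}$ with $B^{g}=B'$ we get $(\Delta\cap B)^{g}=\Delta\cap B'$ (since $\Delta^{g}=\Delta$), so every block through $\alpha$ meets $\Delta$ in the same number $e$ of points and $\lambda d=|S|=re$. With that one-line replacement --- using only transitivity on $\Bmc_{\alpha}$, never any transitivity inside a block --- your argument becomes a complete and correct proof of (b).
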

\begin{proof}
	Since $G$ is flag-transitive, the point stabiliser $G_{\alpha}$ is transitive on the  set of all blocks containing $\alpha$, and so $r=|G_{\alpha}:G_{\alpha,B}|$. Thus $r$ divides $|G_{\alpha}|$.  Part (b) is proved in \cite[p. 9]{a:Davies-87}.
\end{proof}

\begin{lemma}\label{lem:embed}{\rm \cite[2.2.5]{b:dembowski}} Let $\Dmc$ be a $2$-$(v,k,\lambda)$ design. If $\Dmc$ satisfies $r = k +\lambda$ and $\lambda\leq 2$, then $\Dmc$ embeds into a symmetric $2$-$(v + k +\lambda, k +\lambda,\lambda)$ design.
\end{lemma}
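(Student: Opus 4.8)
The plan is to prove the result by explicitly reconstructing the ``missing'' block together with its points, thereby realising $\Dmc$ as the residual of a symmetric design. A $2$-$(v,k,\lambda)$ design with $r=k+\lambda$ is \emph{quasi-residual}: its parameters coincide with those obtained from a symmetric $2$-$(v+k+\lambda,\,k+\lambda,\,\lambda)$ design by deleting one block $B_0$ together with the $k+\lambda$ points incident with it (each surviving block meets $B_0$ in $\lambda$ points of the symmetric design, so it loses $\lambda$ points and keeps $k$). The task is to show that when $\lambda\le 2$ this reverse construction is always possible. First I would record the forced numerical data from Lemma~\ref{lem:param}: for $\lambda=2$ one gets $v=k(k+1)/2$ and $b=(k+1)(k+2)/2$, while for $\lambda=1$ one gets $v=k^{2}$ and $b=k(k+1)$, so that $\Dmc$ is an affine plane of order $k$.

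The combinatorial heart is a constraint on block intersections. Fixing a block $B$ and writing $x_{C}=|B\cap C|$ for each of the $b-1$ other blocks $C$, a double count of incident points and of incident point-pairs gives
\[
\sum_{C\ne B} x_{C}=k(r-1), \qquad \sum_{C\ne B}\binom{x_{C}}{2}=\binom{k}{2}(\lambda-1).
\]
For $\lambda=2$ and $r=k+2$, substituting the values of $v$ and $b$ yields the identity $\sum_{C\ne B}(x_{C}-1)(x_{C}-2)=0$. Since every summand is a product of two consecutive-integer factors and hence nonnegative, each term must vanish, forcing $|B\cap C|\in\{1,2\}$ for all pairs of distinct blocks. (For $\lambda=1$ the second sum is $0$, forcing $|B\cap C|\le 1$, i.e.\ the parallelism of an affine plane.) A further count then shows that each block meets exactly $2k$ others in a single point and exactly $\binom{k}{2}$ others in two points.

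Next I would encode this structure as a graph and identify it. Let $\Gamma$ be the graph on $\Bmc$ with $B\sim C$ iff $|B\cap C|=1$; by the previous step $\Gamma$ is regular of valency $2k$, and these are precisely the parameters of the triangular graph $T(k+2)$ on the $2$-subsets of a $(k+2)$-set $N$. The plan is to verify that $\Gamma$ is strongly regular with the triangular parameters and then apply the known characterisation of strongly regular graphs with the parameters of $T(n)$ (due to Connor, Shrikhande, Hoffman and Chang) to obtain a bijection $\phi$ from $\Bmc$ onto the $2$-subsets of $N$ with $|\phi(B)\cap\phi(C)|=\lambda-|B\cap C|$. With $\phi$ in hand, adjoin the points of $N$, replace each block $B$ by $B\cup\phi(B)$, and add the block $N$ itself; a routine verification (using that each $2$-subset of $N$ equals $\phi(B)$ for a unique $B$) shows the resulting structure is a symmetric $2$-$(v+k+\lambda,\,k+\lambda,\,\lambda)$ design having $\Dmc$ as the residual of the block $N$. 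For $\lambda=1$ the disjointness relation is an equivalence (the parallel classes), so $N$ is simply the set of parallel classes and the construction is the classical projective completion of an affine plane.

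The main obstacle is the graph-identification step: the characterisation of strongly regular graphs with triangular parameters has genuine exceptions (the three Chang graphs when $k+2=8$), so the argument must rule these out for structures arising from a quasi-residual design, or treat that parameter separately via the direct combinatorial reconstruction of Hall and Connor. This is also exactly where the hypothesis $\lambda\le 2$ is used: for $\lambda\ge 3$ the two moment identities above no longer pin the intersection numbers down to two consecutive values, the triangular structure is lost, and embeddability can genuinely fail, so the bound on $\lambda$ is sharp.
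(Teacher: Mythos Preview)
The paper does not give a proof of this lemma at all; it is simply quoted from Dembowski \cite[2.2.5]{b:dembowski} as a black box, so there is no ``paper's approach'' to compare against beyond the citation.

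Your outline is essentially the classical Hall--Connor argument (for $\lambda=2$) together with the projective completion of an affine plane (for $\lambda=1$), and the computations you record are correct: the two moment equations do collapse to $\sum_{C\neq B}(x_C-1)(x_C-2)=0$, forcing intersection numbers in $\{1,2\}$, and the resulting $2k$-regular graph on the $\binom{k+2}{2}$ blocks has the parameters of $T(k+2)$. You have also correctly located the genuine difficulty: one must still prove strong regularity (not merely regularity) and then handle $k+2=8$, where the Chang graphs share the parameters of $T(8)$. Hall and Connor's original 1953--54 proof predates the spectral characterisation of triangular graphs and proceeds by a direct combinatorial reconstruction that does not hit this exception, so invoking their argument for that case (as you suggest) is the standard workaround. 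One further point worth making explicit in a full write-up is why \emph{any} graph isomorphism $\Gamma\cong T(k+2)$ yields a valid embedding: you need that for each point $\alpha$ the $r=k+2$ blocks through $\alpha$ map under $\phi$ to a $2$-factor of $K_{k+2}$, which is what guarantees that a mixed pair (one old point, one new point) lies in exactly two extended blocks. This follows from your intersection data, but it is the step where the design structure, not just the graph, is used.
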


%
\begin{lemma}\label{lem:const}{\rm \cite[Proposition~4.1]{a:DLPB-2021-lam2}}
	Let  $\Smc = (\Pmc, \Lmc)$ be a $2$-$(v, k, 1)$ design with $k \geq 3$, and $\ell \in \Lmc$, and let $G\leq \Aut(\Smc)$. Let also
	$\Bmc = \{\ell \setminus  \{\alpha\} \mid \ell \in \Lmc,\ \alpha\in \ell\}$. Then  
	$\Dmc(\Smc) = (\Pmc, \Bmc)$  is a non-symmetric $2$-$(v, k-1, k-2)$
	design and $G$ is a subgroup of $\Aut(\Dmc(\Smc))$. Moreover, if $G$ is flag-transitive on $\Smc$ and $G_{\ell}$ is $2$-transitive on $\ell$, then $G$ is flag-transitive and point-primitive on $\Dmc(\Smc)$.
\end{lemma}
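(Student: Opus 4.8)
The plan is to work with $\Dmc(\Smc)$ concretely by identifying each block $B=\ell\setminus\{\alpha\}$ with the pair $(\ell,\alpha)$ consisting of a line of $\Smc$ together with the point removed from it. First I would record the key bookkeeping fact that, since $k\geq 3$ makes every block contain at least two points, a block $B\in\Bmc$ determines its line $\ell$ uniquely: $\ell$ is the unique line of the linear space $\Smc$ through any two points of $B$, and then $\alpha$ is the unique point of $\ell$ not in $B$. Hence $\Bmc$ is an honest set of $(k-1)$-subsets, in bijection with the pairs $(\ell,\alpha)$ with $\alpha\in\ell$; this is where $k\geq 3$ is needed, and it is also what makes the word ``non-symmetric'' meaningful later.

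Next I would verify the parameters by a direct double count. Given two distinct points $\beta,\gamma\in\Pmc$, the condition $\lambda=1$ on $\Smc$ gives a unique line $\ell_0$ through them, and a block $\ell\setminus\{\alpha\}$ contains both $\beta$ and $\gamma$ if and only if $\ell=\ell_0$ and $\alpha\in\ell_0\setminus\{\beta,\gamma\}$; there are exactly $|\ell_0|-2=k-2$ such $\alpha$. Thus every pair lies in exactly $k-2$ blocks and $\Dmc(\Smc)$ is a $2$-$(v,k-1,k-2)$ design (with $k-1\geq 2$ and $k-2\geq 1$ again by $k\geq 3$). The same counting, or the relation in Lemma~\ref{lem:param}(a), gives replication number $r'=v-1$; since $\Smc$ is nontrivial we have $v>k$, so $r'=v-1>k-1=k'$. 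As a symmetric design satisfies $r=k$, this shows $\Dmc(\Smc)$ is non-symmetric.

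For the automorphism statement I would note that any $g\in G\leq\Aut(\Smc)$ permutes $\Pmc$, maps lines to lines, and preserves incidence, so it sends $\ell\setminus\{\alpha\}$ to $\ell^{g}\setminus\{\alpha^{g}\}\in\Bmc$; hence $G\leq\Aut(\Dmc(\Smc))$. The core is flag-transitivity. A flag of $\Dmc(\Smc)$ is a pair $(\beta,B)$ with $\beta\in B=\ell\setminus\{\alpha\}$, equivalently a triple $(\ell,\alpha,\beta)$ with $\alpha,\beta\in\ell$ distinct (and $\ell$ the unique line through $\alpha,\beta$). I would show $G$ is transitive on such triples: flag-transitivity of $G$ on $\Smc$ in particular makes $G$ line-transitive, so the $2$-transitivity of $G_{\ell}$ on $\ell$ transfers by conjugacy to every line; then, given two triples $(\ell_1,\alpha_1,\beta_1)$ and $(\ell_2,\alpha_2,\beta_2)$, flag-transitivity supplies $g\in G$ with $\beta_1^{g}=\beta_2$ and $\ell_1^{g}=\ell_2$, whence $\alpha_1^{g}\in\ell_2\setminus\{\beta_2\}$, and an element $h$ of the flag-stabiliser $G_{\beta_2,\ell_2}=(G_{\ell_2})_{\beta_2}$, which is transitive on $\ell_2\setminus\{\beta_2\}$ by the $2$-transitivity hypothesis, sends $\alpha_1^{g}$ to $\alpha_2$ while fixing $\beta_2$ and $\ell_2$. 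Then $gh$ carries the first triple to the second, so $G$ is flag-transitive. Equivalently this computation shows $G$ is $2$-transitive on $\Pmc$, and since $2$-transitive groups are primitive, $G$ is point-primitive on $\Dmc(\Smc)$.

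Most of the lemma is routine counting; the one step deserving care, and the point where both hypotheses are used together, is flag-transitivity. The subtlety is that a flag of $\Dmc(\Smc)$ carries one extra datum over a flag of $\Smc$, namely the removed point $\alpha$: flag-transitivity of $\Smc$ moves the incident point and the line, while $2$-transitivity of the line-stabiliser is exactly what moves $\alpha$ independently. I would be careful to justify the transfer of $2$-transitivity from the distinguished line $\ell$ to an arbitrary line (via line-transitivity) and to confirm the distinctness of blocks noted above, both of which rest on $k\geq 3$.
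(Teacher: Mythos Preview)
The paper does not supply its own proof of this lemma; it is quoted verbatim from \cite[Proposition~4.1]{a:DLPB-2021-lam2} and used as a black box. Your argument is correct and is essentially the standard one: the identification of blocks with pairs $(\ell,\alpha)$, the count of $k-2$ blocks through a pair of points via the unique line of $\Smc$, and the reduction of flag-transitivity on $\Dmc(\Smc)$ to transitivity on ordered triples $(\ell,\alpha,\beta)$ (hence $2$-transitivity on $\Pmc$, giving point-primitivity) are exactly the intended steps. The only place to tighten is the appeal to ``$\Smc$ is nontrivial'' for $v>k$: this is not stated in the hypotheses, but it is implicit in the conclusion that $\Dmc(\Smc)$ is non-symmetric, so you might simply remark that the trivial case $v=k$ yields the complete design, which is symmetric.
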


\section{Proof of the main result}\label{sec:proof}

Suppose that $\Dmc$ is a nontrivial $(v, k, \lambda)$ design admitting a flag-transitive almost simple automorphism group $G$ with socle $X$ being a finite simple exceptional group of Lie type. If $r$ is odd, then $r$ is coprime to $\lambda=2$, and so by \cite[Theorem 1.1]{a:A-Exp-CP}, we have that  $X={}^2\!B_{2}(q)$ or $X={}^2\!G_{2}(q)$ but in each of these cases $\lambda$ cannot be $2$ (see also \cite{a:ABFGMRTZ-CP}). Therefore, in what follows, we may assume that $r$ is even. 

Since $G$ is point-transitive, it follows from
Lemma~\ref{lem:New} and the point-stabiliser Theorem that
\begin{align}
	v=\frac{|X|}{|H\cap X|}.\label{eq:v}
\end{align}

For a point-stabiliser $H=G_{\alpha}$ of the  automorphism group $G$ of a flag-transitive design $\Dmc$, the subgroup $H$ is transitive on the set of blocks containing $\alpha$, and so $r$ divides $|H|$. On the other hand, the facts that $r(k-1)=\lambda(v-1)$ and $r>k$ imply that $\lambda v<r^{2}$. Thus  $\lambda|G|\leq |H|^{3}$, and hence the subgroup $H$ is \emph{large} in $G$, that is to say, $|H|^{3}\geq |G|$. We now apply \cite[Theorem 1.6]{a:ABD-Exp} and obtain the list of possibilities for the point-stabiliser subgroup  $H$ of $G$, and in conclusion, $H$ is either parabolic, or one of the subgroups recorded in \cite[Tables 2-3]{a:A-Exp-CP}.  For each such a possibility,  we note by Lemmas~\ref{lem:param}(a) and \ref{lem:six} that $r$ divides $|H|$, $2(v-1)$ and $2d$ for all nontrivial subdegrees $d$ of $G$. This enables us to find an upper bound $u_{r}$ for $r/2$. We  can moreover obtain a lower bound $\ell_{v}$ for $v$. The values $\ell_{v}$ and $u_{r}$ can be read of from \cite[Tables 2-3]{a:A-Exp-CP} with noting that in Table 3 of \cite{a:A-Exp-CP}, the values of $v$ and $r$ are listed for small $q$.

If $X$ and $H\cap X$  are as in Table 3 of \cite{a:A-Exp-CP}, then the possibilities of the parameters  $v$ and $r$ are as in the third and fourth columns of the same table, respectively. For each $v$, we know that $r$ divides $2(v-1)$, and so for each pair $(v,r)$, the parameter $b$ must divide $vr$. For each divisor $b$ of $vr$, we obtain the parameter $k$ by $k=vr/b$, and finally, for each $(v,b,r,k)$, we can then find $\lambda:=r(k-1)/(v-1)$ which has to be $2$. However, the possibilities recorded in \cite[Table 3]{a:A-Exp-CP}  give rise to no possible parameter set.

For each case recorded in Table 2 of \cite{a:A-Exp-CP}, we know a lower bound $\ell_{v}$ of $v$ and an upper bound $u_{r}$ of $r/2$. Thus, $r\leq 2 u_{r}$, and since $2v\leq r^{2}$ by Lemma~\ref{lem:param}(d), it follows that $\ell_{v}<2u_{r}^{2}$.
Running through all the possibilities recorded in \cite[Table 2]{a:A-Exp-CP}, we observe that $2u_{r}^2<\ell_{v}$ except for the case where $X=G_{2}(q)$ and $H\cap X=A_{2}^{\e}:2$. In this case $v=q^{3}(q^{3}+\e1)/2$ and $r$ divides $2(q^{3}-\e1)/\gcd(2,q-1)$. Then there exists a positive integer $m$ such that $rm\cdot \gcd(2,q-1)=2(q^{3}-\e1)$, and so the condition $\lambda v<r^{2}$ implies that
\begin{align*}
  q^{3}(q^{3}+\e1)<\frac{4(q^{3}-\e1)^{2}}{m^{2}\cdot \gcd(2,q-1)^{2}}.
\end{align*}
This inequality is true when $m=1$, or $(m,\e)=(2,-)$ and $q$ is even.
Let $q$ be odd and $(m,\e)=(1,+)$. Then $r=q^{3}-1$ and $k=q^{3}+3$, which contradicts Lemma~\ref{lem:param}(c). 
Let $q$ be odd and $(m,\e)=(1,-)$, or $q$ be even and $(m,\e)=(2,-)$. Then $v=q^{3}(q^{3}-1)/2$, $b=q^{3}(q^{3}+1)/2$, $r=q^{3}+1$ and $k=q^{3}-1$, and so $r=k+\lambda$ which is impossible by Lemma~\ref{lem:embed} and \cite{a:Regueiro-Exp}. 
Let now $q$ be even and $(m,\e)=(1,+)$. Then $v=q^{3}(q^{3}+1)/2$, $r=2(q^{3}-1)$ and $k=(q^{3}+4)/2$, and so Lemma~\ref{lem:param}(b) implies that $b=(2q^{9}-2q^{3})/(q^{3}+4)$. Since $2q^{9}-2q^{3}=(2q^6-8q^3+30)(q^{3}+4)-120$ and $b$ is a positive integer, it follows that $q^{3}+4$ is a divisor of $120$, which is correct if $q=2$, however, $G_{2}(2)$ is not simple, which is a contradiction. 
Let finally $q$ be even and $(m,\e)=(1,-)$. Then $v=q^{3}(q^{3}-1)/2$, $b=2(q^{6}-1)$, $r=2(q^{3}+1)$ and $k=q^{3}/2$ and $\lambda=2$, when $H_{0}:=H\cap X=\SU_{3}(q):2$ with $q$ even. Then  $H=H_{0}:T$ for some subgroup $T$ of $\Out(X)$.  
Let $B$ be a block of $\Dmc$ containing $\alpha$, and let  $K:=G_{B}$ and $L:=K\cap X$ and $S_{0}:=L\cap \SU_{3}(q)$. Then $S_{0}$ is a subgroup of $\SU_3(q)$ whose order is divisible by $q^3(q^2-1)/2$, and so by \cite[Table 8.5]{b:BHR-Max-Low}, we conclude that $S_{0}$ is (isomorphic) to a subgroup of a maximal parabolic subgroup $q^{1+2}:(q^2-1)$ of order $q^{3}(q^2-1)/2$ in $\SU_{3}(q)$, and hence $S_{0}$ has an element of order $q^{2}-1$. Since $S_0$ is a subgroup of $L\cap X$, we conclude that $K_{0}:=K\cap X$ has an element of order $q^{2}-1$. 
let now $N$ be a maximal subgroup of $G$ containing the block-stabiliser $K$. Then $|G:N|$ is divisible by $b=2(q^6-1)$, and so by inspecting the index of the maximal subgroups of $G$ from \cite{a:Cooperstein-G2-even}, we conclude that $N_0:=N\cap X$ is (isomorphic) to a parabolic subgroup $q^{1+4}:\GL_2(q)$ or $q^{2+3}:\GL_2(q)$. Thus 
$K_{0}=K\cap X$ is a subgroup of $N_{0}$ of order $q^{6}(q^{2}-1)/2$.  
Let $N_{0}=Q:M$, where $Q=[q^5]$ and $M=\GL_{2}(q)$. Then 
the intersection between $K_{0}\cap Q$ is a normal subgroup of $K_{0}$ of order $q^5$ or $q^5/2$, and so $K_{0}Q/Q$ is isomorphic to a subgroup $W$ of $M$ of order $q(q^2-1)$ or $q(q^2-1)/2$, respectively. Recall that $K_{0}$ has an element of order $q^{2}-1$. Therefore,  $W$ has a cyclic group of order $q^2-1$, which in particular implies that $W$ contains the center $Z$ of $M=\GL_2(q)$, and so $W/Z$ is a subgroup of $\PGL_{2}(q)$ containing a cyclic subgroup of order $q+1$. Now by inspecting the subgroups of $\PGL_{2}(q)$, we easily conclude that this is possible only when $q=2$ or $4$. Since $G_{2}(2)$ is not simple, we have that $X=G_{2}(4)$, and so $G=G_{2}(4)$ or $G_{2}(4):2$, and $(v,b,r,k,\lambda)=(2016, 8190, 130, 32, 2 )$. Assume first $G=G_{2}(4)$. Then by GAP \cite{GAP4}, we observe that $G$ has two nonconjugate subgroups $K_1$ and $K_{2}$ of index $b=8190$. We also observe that $K_{1}$ has orbit lengths $16$, $80$ and $1920$, and $K_{2}$ has orbit lengths $480$ and $1536$. 
But $G$ is flag-transitive, and so $k$ must be the length of a $K_{1}$-orbit or $K_{2}$-orbit on $v=2016$, which is a contradiction. By the same argument, the  case where $G=G_{2}(4):2$ can be ruled out.

It remains to consider the case where $H$ is a parabolic subgroup of $G$. In what follows, we further assume that $K=G_B$ and $K_{0}=K\cap X$, where $B$ is a block containing $\alpha$. We now continue our argument by case by case analysis. We note here that the value of the  parameter $v$ in each case, can be read off from~\cite[Table 4]{a:ABD-Exp}.

Suppose  that $X={}^2\!B_{2}(q)$ and $H\cap X=q^{2}{:}(q-1)$ with $q=2^a$ for some odd $a\geq 3$. Then by \eqref{eq:v}, we have that  $v=q^2+1$, and so $r(k-1)=2(v-1)=2q^2=2^{2a+1}$. Let $r=2^{c}$. Then $k=2^{2a-c+1}+1$. 
Note that $b=rv/k$, $v=q^2+1$ and $G$ is transitive on the set of blocks of $\Dmc$. Then $|G:K|=b=2^{c}(q^2+1)/k$, where $K=G_{B}$ with $\alpha \in B$. Assume now that  $N_{0}$ is a maximal subgroup of $X$ containing $K_{0}=K\cap X$. Then $|X:N_{0}|$ must divide $b$. The knowledge of maximal subgroups of $X={}^2\!B_{2}(q)$ shows that $K_{0}$ embeds into a parabolic subgroup $N_{0}\cong q^{2}{:}(q-1)$ of index $q^{2}+1$. Moreover, $|X:K_{0}|$ divides $b=2^{c}(q^2+1)/k$, and since $|X:N_{0}|=q^{2}+1$ divides $|X:K_{0}|$, it follows that $k$ divides $2^{c}$, which is impossible as $k$ is odd.

Suppose that $X={}^2\!G_{2}(q)$ and $H\cap X=q^3{:}(q-1)$ with $q=p^{a}=3^{2m+1}\geq 27$. By \eqref{eq:v}, we have that  $v=q^3+1$.  Since $r$ is even, it follows from Lemma~\ref{lem:param}(a) that $r$ divides $\lambda(v-1)=2q^{3}$, and so $r=2\cdot 3^{c}$ for some $1\leq c\leq 3a$. Again Lemma~\ref{lem:param}(a) implies that $k=3^{t}+1$, where $t:=3a-c$. We first observe that $t\geq 1$ as $k\geq 3$. 
Since $v=q^3+1$ and $k=3^{t}+1$, it follows that $|G:K|=b=vr/k=2\cdot 3^{c}\cdot (q^3+1)/(3^{t}+1)$, where $K=G_{B}$ with $\alpha\in B$. Assume now that $N$ is a maximal subgroup of $X$ containing $K$. Then $|G:N|$ divides $b$. The list of maximal subgroups of $X=\!^{2}\!G_{2}(q)$ can be read off from \cite[Table 8.43]{b:BHR-Max-Low} (see also \cite{a:Levchuk-Ree}), and so by inspecting the index of the maximal subgroups $N$ in $G$, we conclude that $K$ is contained in $N=(\langle z \rangle\times A_{1}(q)):T$, where $T\leq\Out(X)$ and $z$ is an involution in $X$. In this case, as $|G:N|=q^{2}(q^{2}-q+1)$ divides $b=2\cdot 3^{c}\cdot (q^3+1)/k$, it follows that $k=3^{t}+1$ divides $2(q+1)$. Then  $t\leq a$, or equivalently, $c\geq 2a$. Moreover, if  $N_{0}=N\cap X$, then $N_{0}=\langle z \rangle\times A_{1}(q)$ is a maximal subgroup of $X$ containing $K_{0}=K\cap X$.  
Let $\beta$ and $\gamma$ be two distinct points in $B\setminus\{\alpha\}$. Then $X_{\alpha,\beta}$ is isomorphic to a cyclic group of order $q-1$ which has index $q^{3}$ in $H_{0}$, see \cite{a:Tits-61} or \cite[Lemma 3.2]{a:Pierro-Ree-16}. Moreover, $X_{\alpha,\beta,\gamma}$ is generated by an involution, and hence the nontrivial $X_{\alpha,\beta}$-orbits are of length $q-1$ or $(q-1)/2$ depending on whether $\gamma$ is fixed by $X_{\alpha,\beta}$ or not, respectively. Let now $C\neq B$ be the block of $\Dmc$ passing through both $\alpha$ and $\beta$. Then $B\cup C$ is fixed by $X_{\alpha,\beta}$, and so $J:=(B\cup C)\setminus\{\alpha,\beta\}$ is a union of nontrivial $X_{\alpha,\beta}$-orbits. 
Thus $(q-1)/2$ divides $|J|=2(k-1)-|B\cap C|=2\cdot 3^{t}-|B\cap C|$. As $|B\cap C|\geq 2$, we have that  $|J|\leq 2\cdot3^{t}-2$, and so $(q-1)/2\leq 2\cdot3^{t}-2$. Thus $3^{a}+3\leq 4\cdot 3^{t}$. Since $t\leq a$, this inequality holds only for $t=a-1$ or $a$, and so $k=(q+3)/3$ or $q+1$, respectively. 
If  $k=(q+3)/3$, then $b=18q^{2}(q^{3}+1)/(q+3)$, and so $(q+3)/3=3^{a-1}+1$ must divide $2(3^{3a}+1)$. Since $\gcd(3^{a-1}+1,3^{3a}+1)=2$, it follows that $3^{a-1}+1$ divides $4$, and so $a=1$ or $2$, which is impossible. Thus, $k=q+1$, and hence $r=2q^{2}$ and $b=2q^{2}(q^{2}-q+1)$. This also requires $K_{0}=A_{1}(q)$ and $K=A_{1}(q):T$. Therefore, $N_{0}\cap H_{0}=\langle z \rangle\times (q:\frac{q-1}{2})$  by \cite[Lemma~3.2]{a:Pierro-Ree-16}, and so  $L_{0}:=X_{\alpha,B}=H_{0}\cap K_{0}=q:\frac{q-1}{2}$ and $L:=H\cap K=(q:\frac{q-1}{2}):T$. Note that $K_{0}$ acts $2$-transitively on $B$. Then $X$ is also flag-transitive. Note also that $N_{0}=C_{X}(z)$ is the line-stabiliser $X_{\ell}$ of the Ree Unital $\Umc_{R}(q)$, where the line $\ell$ contains all $q+1$ fixed points of the involution $z$. Since $K_{0}=X_{B}$ is a normal subgroup of $X_{\ell}=N_{0}$, it follows that $\ell$ is a union of $X_{B}$-orbits. Since $|\ell|=|B|=q+1$ and $z$ fixes $\alpha\in B$ (in fact it fixes at least two points in $B$), we conclude that $B=\ell$, and so $z$ fixes $B$, that is to say, $z\in X_{B}=K_{0}$, which is a contradiction.  

\begin{table}
	\centering
	\scriptsize
	\caption{Some parameters for some parabolic subgroups of almost simple groups with socle $X$.}\label{tbl:parab}
		\begin{tabular}{llllll}
			\hline\noalign{\smallskip}
			\multicolumn{1}{l}{Line} &
			\multicolumn{1}{l}{$X$} &
			\multicolumn{1}{l}{$H\cap X$} &
			\multicolumn{1}{l}{$v$} &
			\multicolumn{1}{l}{$|v-1|_{p}$} &
			\multicolumn{1}{l}{Comments}
			\\
			\noalign{\smallskip}\hline\noalign{\smallskip}
			$1$  &
			$^{3}\!D_{4}(q)$&
			$P_{1}$  &
			$\Phi_{2}\Phi_{3}\Phi_{6}\Phi_{12}$  &
			$q^{3}$ &
			\\
			$2$ &
			$^{2}\!F_{4}(q)$&
			$P_{1}$  &
			$\Phi_{2}\Phi_{4}^{2}\Phi_{6}\Phi_{12}$ &
			$q^{2}$&
			\\
			$3$ &
			$F_{4}(q)$&
			$P_{1,4}$  &
			$\Phi_{2}^{2}\Phi_{3}^{2}\Phi_{4}\Phi_{6}^{2}\Phi_{8}\Phi_{12}$ &
			$2q$ &
			$q=2^{a}$, $H$ contains graph automorphism \\
			$4$ &
			$F_{4}(q)$&
			$P_{2,3}$  &
			$\Phi_{2}^{2}\Phi_{3}^{2}\Phi_{4}^{2}\Phi_{6}^{2}\Phi_{8}\Phi_{12}$ &
			$2q$ &
			$q=2^{a}$, $H$ contains graph automorphism \\
			%
			%
			%
			$5$  &
			$E_{6}(q)$   &
			$P_{1,6}$ &
			$\Phi_{3}^{2}\Phi_{5}\Phi_{6}\Phi_{8}\Phi_{9}\Phi_{12}$&
			$\gcd(2,p)q$ &
			$H$ contains graph automorphism \\
			$6$  &
			$E_{6}(q)$   &
			$P_{3,5}$ &
			$\Phi_{2}\Phi_{3}^{2}\Phi_{4}^{2}\Phi_{5}\Phi_{6}^{2}\Phi_{8}\Phi_{9}\Phi_{12}$&
			$\gcd(2,p)q$ &
			$H$ contains graph automorphism \\
			\noalign{\smallskip}\hline
			Note: & \multicolumn{5}{p{13cm}}{$\Phi_{n}:=\Phi_{n}(q)$ is the $n$-th cyclotomic polynomial with $q=p^{a}$ and $p$ prime.}
		\end{tabular}
\end{table}

Suppose that $X=E_{6}(q)$ and $H\cap X=P_{1}$. Then 
$H\cap X=[q^{16}]{:}D_{5}(q)\cdot (q-1)$. Then $v=(q^8+q^4+1)(q^9-1)/(q-1)$. Note by \cite{a:LSS-rank3} that $G$ has nontrivial subdegrees  $q(q^{8}-1)(q^{3}+1)/(q-1)$ and $q^{8}(q^{5}-1)(q^{4}+1)/(q-1)$, and so by Lemma~\ref{lem:six}(b), we conclude that $r$ divides $q(q^{4}+1)$, and so Lemma \ref{lem:param}(d) implies that $v<r^2<q^{12}$, which is a contradiction.

Suppose that $X=E_{6}(q)$ and  $H\cap X=P_{3}$. Then $H\cap X=[q^{25}]{:}A_{1}(q)A_{4}(q){\cdot}(q-1)$. Then $v=(q^{3}+1)(q^{4}+1)(q^{9}-1)(q^{12}-1)/(q-1)(q^{2}-1)$. It follows from \cite{a:Korableva-E6E7} that $X$ has subdegrees $q(q^5-1)(q^4-1)/(q-1)^2$ and $q^{13}(q^{5}-1)/(q-1)$. Moreover, by Lemmas \ref{lem:param} and \ref{lem:six}, the parameter $r$ is a divisor of  $12aq(q^{5}-1)/(q-1)$, but then $2v>r^{2}$, which is a contradiction.

We finally consider the remaining cases. For each remaining $(X,H\cap X)$, the $p$-part $|v-1|_p$ is $q$ except for those listed in Table~\ref{tbl:parab} for which $|v-1|_p$ is recorded in the  fifth column of the same table. Note by Lemma~\ref{lem:subdeg} and Remark~\ref{rem:subdeg} that $G$ has a unique prime power subdegree $p^{n}$. Then by Lemmas \ref{lem:param} and \ref{lem:six}, $r$ must divide $2\gcd( v-1,p^{n})$ implying that $r$ is a divisor of $2u_{r}$ where $u_{r}:=|v-1|_p$. Since $2v<r^2$, we conclude that $v<2(|v-1|_p)^2$, however, this inequality does not hold for all the remaining cases. For example, if $X={}^3\!D_{4}(q)$ and $H\cap X=P_{1}$. Then $H\cap X=[q^{11}]{:}\SL_{2}(q) {\cdot}(q^{3}-1)$, and so $v=(q^8+q^4+1)(q^3+1)$ and $r$ divides $2|v-1|_p=2q^3$, and hence  $q^{11}<v<2r^2<2q^6$, which is a contradiction.

\section*{Acknowledgements}

The author is grateful to Alice Devillers and Cheryl E. Praeger for supporting his visit to UWA (The University of Western Australia) during February–June 2023. He also thanks Bu-Ali Sina University for the support during his sabbatical leave. The author would like to thank Alessandro Montinaro for the comments on the proof of the main result. 

\section{Declaration of competing interest}

The author confirms that this manuscript has not been published elsewhere. It is not also under consideration by another journal. He also confirms that there are no known conflicts of interest associated with this publication. He has no competing interests to declare that are relevant to the content of this article, and he confirms that availability of data and material is not applicable. The author declares that he has no known competing financial interests or personal relationships that could have appeared to influence the work reported in this paper.

\bibliographystyle{elsart-num-sort}


\end{document}